\newcommand{\ring}[1]{\mathbb{#1}}
\newcommand{\op}[1]{\hbox{#1}}
\newcommand{\f}[1]{\frac{1}{#1}}
\newcommand{\Eaff}{E_{\op{\scriptsize\it aff}}}
\newcommand{\Eaf}[1]{E_{\op{\scriptsize\it aff},{#1}}}
\newcommand{\Eoo}{E^{\circ}} 
\newcommand{\Go}{\langle\rho\rangle}
\newcommand{\ang}[1]{\langle{#1}\rangle}
\title{Formal Proof of the Group Law for Edwards Elliptic Curves}
\author{Thomas Hales\inst{1}\and Rodrigo Raya\inst{2}}
\date{}
\institute{University of Pittsburgh\and Technical University of Munich}
\begin{document}

\maketitle

\begin{abstract} 
  This article gives an elementary computational proof of the group
  law for Edwards elliptic curves. The associative law is expressed as
  a polynomial identity over the integers that is directly checked by
  polynomial division.  Unlike other proofs, no preliminaries such as
  intersection numbers, B\'ezout's theorem, projective geometry,
  divisors, or Riemann Roch are required.  The proof of the group law
  has been formalized in the Isabelle/HOL proof assistant.
\end{abstract}


\newenvironment{blockquote}{%
  \par%
  \medskip%
  \baselineskip=0.7\baselineskip%
  \leftskip=2em\rightskip=2em%
  \noindent\ignorespaces}{%
  \par\medskip}


\section{Introduction}

Elliptic curve cryptography is a cornerstone of mathematical
cryptography.  Many cryptographic algorithms (such as the
Diffie-Hellman key exchange algorithm which inaugurated public key
cryptography) were first developed in the context of the arithmetic of
finite fields.  The preponderance of finite-field cryptographic
algorithms have now been translated to an elliptic curve counterpart.
Elliptic curve algorithms encompass many of the fundamental
cryptographic primitives: pseudo-random number generation, digital
signatures, integer factorization algorithms, and public key exchange.

One advantage of elliptic curve cryptography over finite-field
cryptography is that elliptic curve algorithms typically obtain the same
level of security with smaller keys than finite-field algorithms.
This often means more efficient algorithms.

Elliptic curve cryptography is the subject of major international
cryptographic standards (such as NIST).  Elliptic curve cryptography
has been implemented in widely distributed software such as NaCl
\cite{bernstein2012security}.  Elliptic curve algorithms appear in
nearly ubiquitous software applications such as web browsers and
digital currencies.

The same elliptic curve can be presented in different ways by
polynomial equations.  The different presentations are known variously
as the Weierstrass curve $(y^2 = \text{cubic in } x)$, Jacobi curve
$(y^2 = \text{quartic in } x)$, and Edwards curve (discussed below).

The set of points on an elliptic curve forms an abelian group.  Explicit
formulas for addition are given in detail below.  The Weierstrass
curve is the most familiar presentation of an elliptic curve, but it
suffers from the shortcoming that the group law is not given by a
uniform formula on all inputs.  For example, special treatment must be
given to the point at infinity and to point doubling: $P
\mapsto 2P$.  Exceptional cases are bad; they are the source of 
hazards such as side-channel attacks (timing attacks) by
adversaries and implementation bugs \cite{brier2002weierstrass}.

Edwards curves have been widely promoted for cryptographic algorithms
because their addition law avoids exceptional cases and their hazards.
Every elliptic curve (in characteristic different from $2$) is
isomorphic to an elliptic curve in Edwards form (possibly after
passing to a quadratic extension).  Thus, there is little loss of
generality in considering elliptic curves in Edwards form.  For most
cryptographic applications, Edwards curves suffice.

The original contributions of this article are both mathematical and
formal.  Our proof that elliptic curve addition satisfies the axioms
of an abelian group is new (but see the literature survey below for
prior work).  Our proofs were designed with formalization specifically
in mind.  To our knowledge, our proof of associativity in Section
\ref{sec:assoc} is the most elementary proof that exists anywhere in
the published literature (in a large mathematical literature on
elliptic curves extending back to Euler's work on elliptic integrals).
Our proof avoids the usual machinery found in proofs of associativity
(such as intersection numbers, B\'ezout's theorem, projective
geometry, divisors, or Riemann Roch).  Our algebraic manipulations
require little more than multivariate polynomial division with
remainders, even avoiding Gr\"obner bases in most places.  Based on
this elementary proof, we give a formal proof in the Isabelle/HOL
proof assistant that every Edwards elliptic curve (in characteristic
other than $2$) satisfies the axioms of an abelian group.%
\footnote{Mathematica calculations are available at\\
  \url{https://github.com/thalesant/publications-of-thomas-hales/tree/master/cryptography/group_law_edwards}.\\
  The Isabelle/HOL formalization is available at\\
  \url{https://github.com/rjraya/Isabelle/blob/master/curves/Hales.thy}.}

It is natural to ask whether the proof of the associative law also
avoids exceptional cases (encountered in Weierstrass curves)
when expressed in terms of Edwards curves.
Indeed, this article gives a two-line proof of the associative law for
so-called \emph{complete} Edwards curves that avoids case splits and all
the usual machinery.

By bringing significant simplification to the fundamental proofs in
cryptography, our paper opens the way for the formalization of
elliptic curve cryptography in many proof assistants.  Because of its
extreme simplicity, we hope that our approach might be widely replicated
and translated into many different proof assistants.

\section{Published Literature}

A number of our calculations are reworkings of calculations found in
Edwards, Bernstein, Lange et al.~\cite{edwards2007normal},
\cite{bernstein2008twisted}, \cite{bernstein2007faster}.  A geometric
interpretation of addition for Edwards elliptic curves appears in
\cite{arene2011faster}.

Working with the Weierstrass form of the curve, Friedl was the first
to give a proof of the associative law of elliptic curves in a
computer algebra system (in Cocoa using Gr\"obner bases)
\cite{friedl}, \cite{friedl2017elementary}.  He writes, ``The
verification of some identities took several hours on a modern
computer; this proof could not have been carried out before the
1980s.''  These identities were later formalized in Coq with runtime one
minute and 20 seconds \cite{thery2007proving}.  A non-computational
Coq formalization based on the Picard group appears in
\cite{bartzia2014formal}.  By shifting to Edwards curves, we have
eliminated case splits and significantly improved the speed of the
computational proof.

An earlier unpublished note contains more detailed motivation,
geometric interpretation, pedagogical notes, and expanded proofs
\cite{hales2016group}.  The earlier version does not include
formalization in Isabelle/HOL.  Our formalization uncovered and
corrected some errors in the ideal membership problems in
\cite{hales2016group} (reaffirming the pervasive conclusion that
formalization catches errors that mathematicians miss).

Other formalizations of elliptic curve cryptography are found in Coq
and ACL2 by different methods \cite{russinoff2017computationally}.
After we posted our work to the arXiv, another formalization was given
in Coq along our same idea \cite{erbsen2017crafting}
\cite{erbsen2017systematic}.  It goes further by including
formalization of implementation of code, but it falls 
short of our work by not including the far more challenging and
interesting case of projective curves.

We do not attempt to survey the various formalizations of
cryptographic algorithms built on top of elliptic curves.  Because of
the critical importance of cryptography to the security industry, the
formalization of cryptographic algorithms is rightfully a priority
within the formalization community.

\section{Group Axioms}\label{sec:axiom}

This section gives an elementary proof of the group axioms for
addition on Edwards curves (Theorem~\ref{thm:group}).  We include
proofs, because our approach is not previously published.  

Our definition of Edwards curve is more inclusive than definitions
stated elsewhere. Most writers prefer to restrict to curves of genus
one and generally call a curve with $c\ne 1$ a twisted Edwards
curve. We have interchanged the $x$ and $y$ coordinates on the Edwards
curve to make it consistent with the group law on the circle.

\subsection{rings and homomorphisms}

In this section, we work algebraically over an arbitrary field $k$.
We assume a basic background in abstract algebra at the level of
a first course (rings, fields, homomorphisms, and kernels).  We set
things up in a way that all of the main identities to be proved are
identities of polynomials with integer coefficients.

All rings are assumed to be commutatative with identity $1\ne0$.
If $R$ is an integral domain
and if $\delta\in R$, then we write $R[\f{\delta}]$ for
the localization of $R$ with respect to the multiplicative set
$S=\{1,\delta,\delta^2,\ldots\}$; that is, the set of 
fractions with numerators in $R$ and denominators in $S$.  We
will need the well-known fact that if $\phi:R\to A$ is a ring
homomorphism sending $\delta$ to a unit in $A$, then $\phi$ extends
uniquely to a map $R[\f{\delta}]\to A$ that maps a fraction
$r/\delta^i$ to $\phi(r)\phi(\delta^i)^{-1}$.

\begin{lemma}[kernel property] Suppose that an identity $r = r_1 e_1 +
  r_2 e_2 +\cdots + r_k e_k$ holds in a commutative ring $R$.  If $\phi:R\to A$ is
  a ring homomorphism such that $\phi(e_i) =0$ for all $i$, then
  $\phi(r)=0$.
\end{lemma}

\begin{proof}
$\phi(r) = \sum_{i=1}^k \phi(r_i) \phi(e_i) = 0.$
\qed\end{proof}

We use the following rings: $R_0 := \ring{Z}[c,d]$ and $R_n :=
R_0[x_1,y_1,\ldots,x_n,y_n]$.  We introduce the polynomial for the
Edwards curve.  Let
\begin{equation}
e(x,y) = x^2 + c y^2 -1 - d x^2 y^2 \in  R_0[x,y].
\end{equation}

We write $e_i = e(x_i,y_i)$ for the image of the polynomial in $R_j$,
for $i\le j$, under $x\mapsto x_i$ and $y\mapsto y_i$.  Set
$\delta_x = \delta^-$ and $\delta_y = \delta^+$, where
\[\delta^{\pm} (x_1,y_1,x_2,y_2) = 1\pm d x_1 y_1 x_2 y_2\quad\text{and}\] 
\[
\delta(x_1,y_1,x_2,y_2) = \delta_x\delta_y\in R_2.
\]
We write $\delta_{ij}$ for its image of $\delta$ under
$(x_1,y_1,x_2,y_2)\mapsto (x_i,y_i,x_j,y_j)$.  So,
$\delta=\delta_{12}$.

\subsection{inverse and closure}

We write $z_i = (x_i,y_i)$.
We define a pair of rational functions that we denote using
the symbol $\oplus_0$:
\begin{equation}\label{eqn:add}
z_1 \oplus_0 z_2 =  \left(\frac{x_1 x_2 - c y_1 y_2}{1 - d x_1 x_2 y_1 y_2},
\frac{x_1 y_2 + y_1 x_2}{1+d x_1 x_2 y_1 y_2}\right) 
\in R_2[\f{\delta}]\times R_2[\f{\delta}].
\end{equation}
When specialized to $c=1$ and $d=0$, the polynomial $e(x,y)=x^2+y^2-1$ reduces to
a circle, and (\ref{eqn:add}) reduces to the standard group
law on a circle.
Commutativity is a consequence of the subscript symmetry
$1\leftrightarrow 2$ evident in the pair of rational functions:
\[
z_1 \oplus_0 z_2 = z_2\oplus_0 z_1.
\]
If $\phi:R_2[\f{\delta}]\to A$ is a ring homomorphism, we also write
$P_1\oplus_0 P_2\in A^2$ for the image of $z_1\oplus_0 z_2$.  We write
$e(P_i)\in A$ for the image of $e_i=e(z_i)$ under $\phi$.  We often
mark the image $\bar r=\phi(r)$ of an element with a bar accent.

Let $\iota(z_i) =\iota(x_i,y_i) = (x_i,-y_i)$.  The involution $z_i\to
\iota(z_i)$ gives us an inverse with properties developed below.

There is an obvious identity element $(1,0)$, expressed as follows.
Under a homomorphism $\phi:R_2[\f{\delta}]\to A$, mapping $z_1\mapsto
P$ and $z_2\mapsto (1,0)$, we have
\begin{equation}
P\oplus_0(1,0) = P.
\end{equation}

\begin{lemma} [inverse] 
  Let $\phi:R_2[\f{\delta}]\to A$, with $z_1\mapsto P$, $z_2\mapsto
  \iota(P)$.  If $e(P)=0$, then $P\oplus_0 \iota(P) = (1,0)$.
\end{lemma}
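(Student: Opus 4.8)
The plan is to compute $P \oplus_0 \iota(P)$ by direct substitution and read off the two coordinates separately. Write $P = (a,b)$, so that $\phi(x_1) = a$, $\phi(y_1) = b$, $\phi(x_2) = a$, and $\phi(y_2) = -b$; in particular $\phi(x_2) = \phi(x_1)$ and $\phi(y_2) = -\phi(y_1)$, which is the only structural feature of $\iota$ that I shall use. Since $\phi$ is a homomorphism out of $R_2[\f{\delta}]$, the element $\phi(\delta) = \phi(\delta_x)\phi(\delta_y)$ is a unit in $A$, and hence each factor $\phi(\delta_x)$, $\phi(\delta_y)$ is itself a unit; this guarantees that the two fractions defining the sum make sense and may be legitimately cleared.

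For the second coordinate the numerator is $x_1 y_2 + y_1 x_2$, so applying $\phi$ gives $a(-b) + b\,a = 0$. Thus the second coordinate equals $0/\phi(\delta_y) = 0$, with no appeal to the curve equation at all. For the first coordinate I would instead show $\phi(x_1 x_2 - c\,y_1 y_2) = \phi(\delta_x)$, so that the fraction collapses to $1$. The cleanest route is to factor $\phi$ through the involution specialization $\sigma : R_2 \to R_1$ that fixes $c,d,x_1,y_1$ and sends $x_2 \mapsto x_1$, $y_2 \mapsto -y_1$. A one-line computation gives $\sigma(x_1 x_2 - c\,y_1 y_2 - \delta_x) = x_1^2 + c\,y_1^2 - 1 - d\,x_1^2 y_1^2 = e_1$, since the denominator $\delta_x = 1 - d x_1 x_2 y_1 y_2$ specializes to $1 + d x_1^2 y_1^2$, supplying exactly the missing $-1 - d\,x^2 y^2$ of the Edwards polynomial. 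As $\phi$ sends $e_1$ to $e(P) = 0$, the kernel property applied to the trivial identity $\sigma(x_1 x_2 - c\,y_1 y_2 - \delta_x) = 1 \cdot e_1$ yields $\phi(x_1 x_2 - c\,y_1 y_2) = \phi(\delta_x)$. Dividing by the unit $\phi(\delta_x)$ shows the first coordinate is $1$, and together with the previous computation we conclude $P \oplus_0 \iota(P) = (1,0)$.

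I do not expect a genuine obstacle: the lemma is essentially a substitution. The only points demanding care are bookkeeping ones, namely checking that $\phi(\delta_x)$ is a unit so the division is valid, and verifying that the first-coordinate numerator-minus-denominator specializes \emph{exactly} to $e_1$ under the involution rather than to some multiple of it. The conceptual content is the fortunate alignment of signs: reversing the sign of $y_2$ turns the subtraction $-c\,y_1 y_2$ into $+c\,y_1^2$ and simultaneously flips $\delta_x$ from $1 - d x_1 x_2 y_1 y_2$ to $1 + d x_1^2 y_1^2$, so that the curve equation $e(P)=0$ is precisely what is needed to equate numerator and denominator.
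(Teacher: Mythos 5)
Your proposal is correct and follows essentially the same route as the paper, whose entire proof is ``Plug $P=(a,b)$ and $\iota\,P=(a,-b)$ into the addition formula and use $e(P)=0$'': the second coordinate vanishes identically, and the first-coordinate numerator minus denominator specializes exactly to $e_1$, which is precisely the alignment of signs you identify. Your only additions are bookkeeping the paper leaves implicit --- factoring $\phi$ through the involution specialization and noting that $\phi(\delta_x)$, $\phi(\delta_y)$ are units because $\phi$ is defined on the localization --- and both checks are carried out correctly.
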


\begin{proof} Plug $P=(a,b)$ and $\iota\,P=(a,-b)$ into
  (\ref{eqn:add}) and use $e(P)=0$.
\qed\end{proof}

\begin{lemma}[closure under addition]\label{lemma:closure}
  Let $\phi:R_2[\f{\delta}]\to A$ with $z_i\mapsto P_i$.  If
  $e(P_1)=e(P_2)=0$, then
  \[
  e(P_1 \oplus_0 P_2) = 0.
  \]
\end{lemma}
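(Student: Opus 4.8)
The plan is to turn the rational identity $e(P_1\oplus_0 P_2)=0$ into a polynomial ideal-membership statement in $R_2$ that is then dispatched by the kernel property lemma. Write the summed coordinates as images of fractions: with $u=x_1x_2-cy_1y_2$ and $v=x_1y_2+y_1x_2$ in $R_2$, the addition law reads $P_1\oplus_0 P_2=(\phi(u/\delta_x),\,\phi(v/\delta_y))$, where $\delta_x=\delta^-$ and $\delta_y=\delta^+$. Since $\phi$ is defined on $R_2[\f{\delta}]$ and $\delta=\delta_x\delta_y$, both $\phi(\delta_x)$ and $\phi(\delta_y)$ are units in $A$; this is the fact that will let us freely clear denominators.

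First I would clear denominators inside the curve polynomial. Substituting $x=u/\delta_x$ and $y=v/\delta_y$ into $e(x,y)=x^2+cy^2-1-dx^2y^2$ and multiplying through by $\delta_x^2\delta_y^2$ produces a single element
\[
N = u^2\delta_y^2 + c\,v^2\delta_x^2 - \delta_x^2\delta_y^2 - d\,u^2v^2 \in R_2,
\]
so that $e(u/\delta_x,\,v/\delta_y)=N/(\delta_x^2\delta_y^2)$ in $R_2[\f{\delta}]$. Because $\phi(\delta_x)$ and $\phi(\delta_y)$ are units, we get $\phi(e(P_1\oplus_0 P_2))=\phi(N)\,\phi(\delta_x)^{-2}\phi(\delta_y)^{-2}$, and it therefore suffices to prove $\phi(N)=0$.

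The decisive step is to exhibit cofactors $r_1,r_2\in R_2$ realizing the polynomial identity $N=r_1e_1+r_2e_2$. Given such an identity, the kernel property lemma applied to $\phi$ (for which $\phi(e_1)=\phi(e_2)=0$ by hypothesis) yields $\phi(N)=0$ at once. I would locate $r_1,r_2$ by multivariate polynomial division, reducing $N$ modulo $e_1$ and then $e_2$ and checking that the remainder vanishes; the quotients returned by the division are exactly the cofactors. The symmetry $1\leftrightarrow 2$ of the addition law forces $r_2$ to be the image of $r_1$ under swapping the two index sets, which halves the bookkeeping. As a structural sanity check, the specialization $c=1$, $d=0$ (the circle, where $\delta_x=\delta_y=1$) collapses $N$ to $(x_1^2+y_1^2)(x_2^2+y_2^2)-1$, giving $r_1=x_2^2+y_2^2$ and $r_2=1$.

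The main obstacle is the size of this last computation rather than any conceptual difficulty: expanding $N$ yields a dense polynomial in $x_1,y_1,x_2,y_2$ of degree as high as $16$ (the top term comes from $\delta_x^2\delta_y^2=(1-d^2x_1^2x_2^2y_1^2y_2^2)^2$) with coefficients in $\ring{Z}[c,d]$, and one must choose a monomial order in which the leading terms of $e_1,e_2$ (for instance the $d\,x_i^2y_i^2$ monomials) reduce $N$ cleanly so that the division terminates at remainder $0$. This is precisely the sort of identity the paper settles by polynomial division alone, so no Gr\"obner basis machinery should be needed.
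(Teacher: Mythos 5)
Your proposal is correct and is essentially the paper's own proof: the paper likewise writes $e(z_1\oplus_0 z_2)=r/\delta^2$ (your $N$ is exactly this numerator $r$), obtains $r=r_1e_1+r_2e_2$ by naive multivariate polynomial division (via \texttt{PolynomialReduce}, no Gr\"obner bases), and concludes with the kernel property, using that $\phi(\delta)$ and hence $\phi(\delta_x),\phi(\delta_y)$ are units to clear denominators. One small inaccuracy in a side remark: the symmetry $1\leftrightarrow 2$ does not \emph{force} $r_2$ to be the index-swap of $r_1$, since the decomposition is not unique---indeed your own circle check $r_1=x_2^2+y_2^2$, $r_2=1$ is not of that form---though a symmetric choice of cofactors does exist.
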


\begin{proof} This proof serves as a model for several proofs that are
  based on multivariate polynomial division.  We write
\[
e(z_1\oplus_0 z_2) = \frac{r}{\delta^2},
\]
for some polynomial $r \in R_2$.  It is enough to show that
$\phi(r)=0$.  Polynomial division gives
\begin{equation}\label{eqn:closure}
r= r_1 e_1 + r_2 e_2,
\end{equation}
for some polynomials $r_i\in R_2$.  Concretely, the polynomials $r_i$
are obtained as the output of the one-line Mathematica command
\[
\op{PolynomialReduce}[r,\{e_1,e_2\},\{x_1,x_2,y_1,y_2\}].
\]
The result now follows from the kernel property and
(\ref{eqn:closure}); $ e(P_1) = e(P_2) = 0$ implies $\phi(r)= 0$,
giving ${e}(P_1\oplus_0 P_2)=0$.
\qed\end{proof}

Mathematica's {\tt PolynomialReduce} is an implementation of a naive
multivariate division algorithm \cite{cox1992ideals}.  In particular,
our approach does not require the use of Gr\"obner bases until
Section~\ref{sec:dichot}.  We write
\[
r \equiv r' \mod S,
\]
where $r-r'$ is a rational function and $S$ is a set of polynomials,
to indicate that the numerator of $r-r'$ has zero remainder when
reduced by polynomial division with respect to $S$ using {\tt
  PolynomialReduce}.  We also require the denominator of $r-r'$ to be
invertible in the localized polynomial ring.  The zero remainder will
give $\phi(r)=\phi(r')$ in each application.  We extend the notation
to $n$-tuples
\[
(r_1,\ldots,r_n) \equiv (r_1',\ldots,r_n') \mod S,
\]
to mean $r_i \equiv r_i' \mod S$ for each $i$.  Using this approach,
most of the proofs in this article almost write themselves.

\subsection{associativity}\label{sec:assoc}

This next step (associativity) is generally considered the hardest
part of the verification of the group law on curves.  Our proof is two
lines and requires little more than polynomial division.  The
polynomials $\delta_x,\delta_y$ appear as denominators in the addition
rule.  The polynomial denominators $\Delta_x,\Delta_y$ that appear
when we add twice are more involved.  Specifically, let $
(x_3',y_3')=(x_1,y_1) \oplus_0 (x_2,y_2)$, let $(x_1',y_1')=(x_2,y_2)
\oplus_0 (x_3,y_3) $, and set
\[
\Delta_x = \delta_x(x_3',y_3',x_3,y_3)
\delta_x(x_1,y_1,x_1',y_1')\delta_{12}\delta_{23}\in R_3.
\]
Define $\Delta_y$ analogously.

\begin{lemma}[generic associativity] \label{lemma:assoc} Let
  $\phi:R_3[\f{\Delta_x\Delta_y}]\to A$ be a homomorphism with
  $z_i\mapsto P_i$.  If $e(P_1)=e(P_2)=e(P_3)=0$, then
\[
(P_1 \oplus_0 P_2)\oplus_0 P_3 = 
P_1 \oplus_0 (P_2\oplus_0 P_3).
\]
\end{lemma}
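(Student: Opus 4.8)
The plan is to reduce the identity $(P_1 \oplus_0 P_2)\oplus_0 P_3 = P_1 \oplus_0 (P_2\oplus_0 P_3)$ to a pair of polynomial identities modulo the curve ideal, following the model of Lemma~\ref{lemma:closure}. Writing both sides out as rational functions, the left-hand side is obtained by first forming $(x_3',y_3') = z_1\oplus_0 z_2$ and then adding $z_3$, while the right-hand side forms $(x_1',y_1') = z_2\oplus_0 z_3$ and then prepends $z_1$. Each side is therefore a pair of rational functions in $R_3[\f{\Delta_x\Delta_y}]$, whose denominators are exactly the products making up $\Delta_x$ (for the first coordinate) and $\Delta_y$ (for the second). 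The hypothesis that $\phi$ inverts $\Delta_x\Delta_y$ guarantees all these denominators map to units, so the images $\bar z_i$ can legitimately be substituted and the fractions manipulated in $A$.

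First I would clear denominators: the two coordinates of the left side minus the two coordinates of the right side are rational functions whose numerators are polynomials $n_x, n_y \in R_3$ and whose common denominators divide $\Delta_x, \Delta_y$ respectively. The claim $(P_1\oplus_0 P_2)\oplus_0 P_3 = P_1\oplus_0(P_2\oplus_0 P_3)$ is then equivalent to $\phi(n_x) = \phi(n_y) = 0$, since the denominators are invertible under $\phi$. Next I would run the multivariate division exactly as before, seeking expressions
\[
n_x \equiv 0,\qquad n_y \equiv 0 \mod \{e_1,e_2,e_3\},
\]
i.e. polynomials $r_{x,i}, r_{y,i}\in R_3$ with $n_x = \sum_i r_{x,i}\,e_i$ and $n_y = \sum_i r_{y,i}\,e_i$, obtained from \texttt{PolynomialReduce}$[n_x,\{e_1,e_2,e_3\},\{x_1,x_2,x_3,y_1,y_2,y_3\}]$ and similarly for $n_y$. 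Given such representations, the kernel property together with $e(P_1)=e(P_2)=e(P_3)=0$ forces $\phi(n_x)=\phi(n_y)=0$, which is exactly the desired identity after re-dividing by the unit denominators.

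The main obstacle I expect is the algebraic scale of the computation rather than any conceptual difficulty. The numerators $n_x, n_y$ are large polynomials in six variables (together with the parameters $c,d$), so the reductions are far heavier than the single-addition case of Lemma~\ref{lemma:closure}, and the outcome of \texttt{PolynomialReduce} is sensitive to the chosen monomial order on $\{x_1,x_2,x_3,y_1,y_2,y_3\}$. A secondary subtlety is bookkeeping for the denominators: I must confirm that the common denominators arising from the two nested additions really are divided by $\Delta_x\Delta_y$ (so that $\phi$ inverts them), which is why $\Delta_x, \Delta_y$ were defined as the specific products of the inner and outer $\delta$-factors $\delta_x(x_3',y_3',x_3,y_3)\,\delta_x(x_1,y_1,x_1',y_1')\,\delta_{12}\,\delta_{23}$. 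Once the zero-remainder reductions are verified and the denominators are accounted for, the proof collapses to the promised two lines: exhibit $n_x \equiv n_y \equiv 0 \mod \{e_1,e_2,e_3\}$ and invoke the kernel property.
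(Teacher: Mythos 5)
Your proposal is correct and is essentially the paper's own proof spelled out in full: the paper's two-line argument is exactly the reduction $((x_1,y_1)\oplus_0(x_2,y_2))\oplus_0(x_3,y_3) \equiv (x_1,y_1)\oplus_0((x_2,y_2)\oplus_0(x_3,y_3)) \mod \{e_1,e_2,e_3\}$ via \texttt{PolynomialReduce} in $R_3[\f{\Delta_x\Delta_y}]$, where the $\equiv$ notation already encodes your steps of clearing the (invertible) denominators, obtaining zero remainder on the numerators, and concluding by the kernel property. You have simply made explicit the bookkeeping that the paper's notation absorbs.
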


\begin{proof} By polynomial division in the
  ring $R_3[\f{\Delta_x\Delta_y}]$
\[
((x_1,y_1)\oplus_0 (x_2,y_2)) \oplus_0 (x_3,y_3)\equiv
(x_1,y_1)\oplus_0 ((x_2,y_2) \oplus_0 (x_3,y_3)) \mod \{e_1,e_2,e_3\}.
\]
\qed\end{proof}

\subsection{group law for affine curves}

\begin{lemma}[affine closure] \label{lemma:affine} Let $\phi:R_2\to k$
  be a homomorphism into a field $k$.  If
  $\phi(\delta)=e(P_1)=e(P_2)=0$, then either $\bar d$ or $\bar c \bar
  d$ is a nonzero square in $k$.
\end{lemma}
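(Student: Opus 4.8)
The plan is to reduce everything to the arithmetic of square classes in $k$, since the conclusion is intrinsically a statement about which elements of $k$ are squares and so cannot follow from a polynomial identity over $\ring{Z}$ alone; this is the one place in the development where we genuinely leave the world of integer polynomial identities. Write $\bar x_i,\bar y_i,\bar c,\bar d$ for the images under $\phi$. Since $\phi(\delta)=\phi(\delta^-)\phi(\delta^+)=0$, one of the two factors vanishes, so $\epsilon:=\bar d\,\bar x_1\bar y_1\bar x_2\bar y_2\in\{1,-1\}$. In particular $\epsilon\neq 0$, whence $\bar d\neq 0$ and all four coordinates $\bar x_1,\bar y_1,\bar x_2,\bar y_2$ are nonzero, and $\epsilon^2=1$.

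The engine of the proof is a single quadratic identity. Working in $k(\sqrt{\bar c})$ (which equals $k$ when $\bar c$ is a square in $k$), I claim that
\[
\bar d\,\bar x_1^2\bar y_1^2\,\bigl(\bar x_2+\sqrt{\bar c}\,\bar y_2\bigr)^2=\bigl(\bar x_1+\epsilon\sqrt{\bar c}\,\bar y_1\bigr)^2 .
\]
This is verified by expanding the left-hand side, replacing $\bar x_2^2+\bar c\bar y_2^2$ and $\bar x_1^2+\bar c\bar y_1^2$ by $1$ plus the relevant $\bar d$-term using $e(P_2)=e(P_1)=0$, and then using $\epsilon=\bar d\bar x_1\bar y_1\bar x_2\bar y_2$ together with $\epsilon^2=1$. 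It is exactly the sort of computation effected by polynomial division, now carried out over the ring obtained by adjoining a formal square root of $c$. Because $\bar x_2,\bar y_2\neq 0$, the factor $\bar x_2+\sqrt{\bar c}\,\bar y_2$ is nonzero: when $\bar c$ is a nonsquare this holds since $1,\sqrt{\bar c}$ are linearly independent over $k$, and when $\bar c$ is a square I instead select whichever of $\bar x_2\pm\sqrt{\bar c}\,\bar y_2$ is nonzero, which is legitimate because both vanish only if $\bar x_2=\bar y_2=0$. Hence the identity exhibits $\bar d$ as a nonzero square in $k(\sqrt{\bar c})$.

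It then remains to descend from $k(\sqrt{\bar c})$ to $k$. If $\bar c$ is a square in $k$ there is nothing to do: $k(\sqrt{\bar c})=k$, so $\bar d$ is already a nonzero square in $k$. If $\bar c$ is a nonsquare, then $K:=k(\sqrt{\bar c})$ is a separable quadratic extension (here I use $\mathrm{char}\,k\neq 2$, the standing hypothesis of the paper). Writing $\sqrt{\bar d}=s+t\sqrt{\bar c}$ with $s,t\in k$ and squaring gives $\bar d=(s^2+\bar c t^2)+2st\sqrt{\bar c}$; comparing the coefficient of $\sqrt{\bar c}$ forces $2st=0$, hence $st=0$. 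If $t=0$ then $\bar d=s^2$ is a nonzero square in $k$, while if $s=0$ then $\bar c\bar d=(\bar c t)^2$ is a nonzero square in $k$. In either case the stated dichotomy holds.

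The quadratic identity of the second paragraph is routine. The only genuine subtleties—and the places I expect to need the most care in formalization—are the square-class descent of the third paragraph, which is where $\mathrm{char}\,k\neq 2$ is essential, and the bookkeeping guaranteeing that the denominator $\bar x_2+\sqrt{\bar c}\,\bar y_2$ never vanishes.
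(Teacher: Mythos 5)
Your proof is correct under the standing characteristic-$\ne 2$ hypothesis, but it takes a genuinely different route from the paper's, and your opening meta-claim --- that the conclusion ``cannot follow from a polynomial identity over $\mathbb{Z}$ alone'' --- is actually refuted by the paper. The paper's entire proof is one cofactor certificate: setting $r = (1 - c d y_1^2 y_2^2)(1 - d y_1^2 x_2^2)$, polynomial division gives the integer identity
\[
r = d^2 y_1^2 y_2^2 x_2^2\, e_1 + (1 - d y_1^2)\,\delta - d y_1^2\, e_2,
\]
so the kernel property forces $\phi(r)=0$, and since $k$ has no zero divisors one of the two factors vanishes, yielding $\bar c \bar d = (\bar y_1 \bar y_2)^{-2}$ or $\bar d = (\bar y_1 \bar x_2)^{-2}$ outright. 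The trick you did not anticipate is that the certificate's left-hand side factors into terms of the shape $1 - (\text{visible square})\cdot \bar c\bar d$ and $1 - (\text{visible square})\cdot \bar d$, so squareness falls out of an integer identity plus the field axioms, with no case split, no extension field, and no characteristic assumption --- matching the lemma as stated, which carries no hypothesis on $\operatorname{char} k$, and slotting directly into the same \textit{PolynomialReduce}/\textit{algebra} machinery used for closure and associativity, which is why it formalizes in one step. Your argument instead passes through $k(\sqrt{\bar c}\,)$: the quadratic identity $\bar d\,\bar x_1^2\bar y_1^2(\bar x_2 + \sqrt{\bar c}\,\bar y_2)^2 = (\bar x_1 + \epsilon\sqrt{\bar c}\,\bar y_1)^2$ is correct (and holds with both sign choices, which you rightly need when $\bar c$ is a square), and the square-class descent via the basis $\{1,\sqrt{\bar c}\}$ is sound; but you use $2\ne 0$ twice (nonvanishing of one of $\bar x_2 \pm \sqrt{\bar c}\,\bar y_2$, and $2st=0 \Rightarrow st=0$), so you prove a slightly weaker statement: in characteristic $2$ the lemma still holds and the paper's certificate works verbatim there, while your descent breaks down. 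What your route buys is explanation --- it locates the squares in the norm form $x^2 + \bar c y^2 = (x+\sqrt{\bar c}\,y)(x-\sqrt{\bar c}\,y)$ and produces explicit square roots indexed by $\epsilon = \pm 1$ --- at the cost of case analysis and field extensions that the paper's one-line certificate deliberately avoids for the sake of formalization.
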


The lemma is sometimes called completeness, in conflict with the usual
definition of \emph{complete} varieties in algebraic geometry.  To
avoid possible confusion, we avoid this terminology.  We use the lemma
in contrapositive form to give conditions on $\bar d$ and $\bar c\bar
d$ that imply $\phi(\delta)\ne0$.

\begin{proof} 
  Let $r = (1 - c d y_1^2 y_2 ^2) (1 - d y_1^2 x_2^2)$.  We have
\begin{equation}\label{eqn:squares}
r = d^2 y_1^2 y_2^2 x_2^2 e_1 + (1 - d y_1^2) \delta - d y_1^2 e_2.
\end{equation}
This forces $\phi(r)=0$, which by the form of $r$ implies that $\bar
c\bar d$ or $\bar d$ is a nonzero square.
\qed\end{proof}

We are ready to state and prove one of the main results of this
article.  This group law is expressed generally enough
to include the group law on the circle and ellipse as a special case
$\bar d = 0$.

\begin{theorem}[group law]\label{thm:group} 
  Let $k$ be a field, let $\bar c \in k$ be a square, and let $\bar
  d\not\in k^{\times 2}$.  
  Then 
  \[
  C= \{P\in k^2 \mid  e(P) = 0\}
  \]
   is an abelian
  group with binary operation $\oplus_0$.
\end{theorem}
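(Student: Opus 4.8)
The plan is to verify the five abelian-group axioms for $(C,\oplus_0)$ by assembling the lemmas already proved. Since $\oplus_0$ is a priori only a pair of rational functions, the first order of business is to check that it restricts to a genuine operation $C\times C\to C$; this well-definedness is the heart of the matter and is exactly where the hypotheses on $\bar c$ and $\bar d$ enter. Once it is in hand, commutativity is immediate from the $1\leftrightarrow 2$ symmetry of the defining formula, the identity axiom follows from $P\oplus_0(1,0)=P$ together with $e(1,0)=0$, and the inverse axiom follows from the inverse lemma after noting that $e(x,-y)=e(x,y)$ places $\iota(P)$ in $C$ whenever $P\in C$.

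For well-definedness, fix $P_1,P_2\in C$. The two denominators occurring in $z_1\oplus_0 z_2$ are $\delta^-(P_1,P_2)$ and $\delta^+(P_1,P_2)$, whose product is $\delta(P_1,P_2)$, so it suffices to show this product is nonzero. I argue by the contrapositive of the affine closure lemma (Lemma~\ref{lemma:affine}): if $\delta(P_1,P_2)=0$, that lemma forces $\bar d$ or $\bar c\bar d$ to be a nonzero square in $k$. The hypothesis $\bar d\notin k^{\times 2}$ rules out the first option, and the second is impossible as well, for if $\bar c\bar d$ were a nonzero square then $\bar c\neq 0$, so $\bar c$ would be a \emph{nonzero} square and $\bar d=(\bar c\bar d)/\bar c$ a quotient of nonzero squares, hence itself a nonzero square, again contradicting $\bar d\notin k^{\times 2}$. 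Thus $\delta(P_1,P_2)\neq 0$, both denominators are units, and the evaluation map $R_2[\f{\delta}]\to k$ is a legitimate homomorphism. Feeding it into the closure lemma (Lemma~\ref{lemma:closure}) gives $e(P_1\oplus_0 P_2)=0$, so $P_1\oplus_0 P_2\in C$ and $\oplus_0$ is well defined on $C$.

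Associativity then reduces to the generic associativity lemma (Lemma~\ref{lemma:assoc}), whose only extra requirement is that the homomorphism land in a ring where $\Delta_x\Delta_y$ is a unit. Here I exploit that well-definedness already guarantees the relevant $\delta$-factors are nonzero. Given $P_1,P_2,P_3\in C$, closure returns $P_1\oplus_0 P_2$ and $P_2\oplus_0 P_3$ to $C$; then every factor of $\Delta_x$ and $\Delta_y$ is a value of $\delta^{\pm}$ (or of $\delta=\delta^+\delta^-$) at one of the pairs $(P_1,P_2)$, $(P_2,P_3)$, $(P_1\oplus_0 P_2,P_3)$, $(P_1,P_2\oplus_0 P_3)$, all of which are pairs of points of $C$. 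By the non-vanishing just established, each such factor is nonzero, so $\Delta_x\Delta_y$ evaluates to a unit in $k$ and Lemma~\ref{lemma:assoc} applies verbatim.

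The main obstacle is the well-definedness step: everything downstream — closure, inverses, and especially the invertibility of $\Delta_x\Delta_y$ needed for associativity — rests on the denominators never vanishing on $C$, which is precisely the affine closure lemma combined with the arithmetic hypotheses that $\bar c$ is a square while $\bar d$ is not. I expect the only real subtlety to be the edge case $\bar c=0$ in the square-chasing argument, which is why I separate the two conclusions of Lemma~\ref{lemma:affine} and handle the factor $\bar c\bar d$ by first deducing $\bar c\neq 0$ rather than cancelling $\bar c$ blindly.
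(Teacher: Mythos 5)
Your proposal is correct and takes essentially the same route as the paper's proof: each group axiom is reduced to the lemmas on closure, inverse, and generic associativity, with the contrapositive of the affine closure lemma (combined with the hypotheses that $\bar c$ is a square and $\bar d\not\in k^{\times 2}$) guaranteeing that $\delta$ and $\Delta_x\Delta_y$ evaluate to nonzero, hence invertible, elements of $k$, so the evaluation map extends to the localized rings by the universal property. Your explicit square-chasing argument ruling out $\bar c\bar d$ being a nonzero square, and your factor-by-factor analysis of $\Delta_x\Delta_y$, simply spell out details the paper compresses into ``repeated use of the affine closure lemma.''
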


\begin{proof} This follows directly from the earlier results.  For
  example, to check associativity of $P_1\oplus_0 P_2\oplus_0 P_3$, where
  $P_i\in C$, we define a homomorphism $\phi:R_3\to k$ sending
  $z_i\mapsto P_i$ and $(c,d)\mapsto (\bar c,\bar d)$.  By a repeated
  use of the affine closure lemma, $\phi(\Delta_y\Delta_x)$ is nonzero
  and invertible in the field $k$.  The universal property of
  localization extends $\phi$ to a homomorphism
  $\phi:R_3[\f{\Delta_y\Delta_x}]\to k$.  By the associativity lemma
  applied to $\phi$, we obtain the associativity for these three
  (arbitrary) elements of $C$.  The other group axioms follow similarly
  from the lemmas on closure, inverse, and affine closure.
\qed\end{proof}

The Mathematica calculations in this section are fast. For example,
the associativity certificate takes about $0.12$ second to compute on
a 2.13 GHz processor.  

\section{Formalization in Isabelle/HOL}

In this section, we describe the proof implementation in Isabelle/HOL.
We have formalized the two main theorems (Theorem~\ref{thm:group} and
Theorem~\ref{thm:proj-group}).  Formalization uses two different
locales: one for the affine and one for the projective case. (The
projective case will be discussed in Section~\ref{sec:proj}.)

Let $k$ be the underlying curve field. $k$ is introduced as the type
class \textit{field} with the assumption that $2 \neq 0$
(characteristic different from $2$). This is not included in the
simplification set, but used when needed during the proof.  The
formalized theorem is slightly less general than then informal
statement, because of this restriction.

\subsection{affine Edwards curves}

The formal proof fixes the curve parameters $c,d \in k$ (dropping the
bar accents from notation). The group addition $\oplus_0$ (of
Equation~\ref{eqn:add}) can be written as in Figure~\ref{fig:1}.  In
Isabelle's division ring theory, the result of division by zero is
defined as zero. This has no impact on validity of final results, but
gives cleaner simplifications in some proofs.

\begin{figure}
	{\input{proj-add-0.tex}}
	\caption{Definition of $\oplus_0$ in Isabelle/HOL}
	\label{fig:1}
\end{figure}

Most of the proofs in this section are straight-forward. The only
difficulty was to combine the Mathematica certificates of computation, 
into a single process in Isabelle.

In Figure~\ref{fig:2}, we show an excerpt of the proof of
associativity. We use the following abbreviations: 
\[
e_i = x_i^2 + c *
y_i^2 - 1 - d * x_i^2 * y_i^2 
\] 
where $e_i = 0$, since the involved
points lie on the curve and 
\[
\text{gxpoly} = ((p_1 \oplus_0 p_2)
\oplus_0 p_3 - p_1 \oplus_0 (p_2 \oplus p_3))_1*\Delta_x
\] 
which stands for a normalized version of the associativity law after
clearing denominators. We say that points are \emph{summable}, if the
rational functions defining their sum have nonzero denominators.
Since the points $p_i$ are assumed to be summable, $\Delta_x \neq
0$. As a consequence, the property stated in Figure~\ref{fig:2}
immediately implies that associativity holds in the first component of
the addition.

\begin{figure}
	{\input{proj-add-4.tex}}
	\caption{An excerpt of the proof of associativity}
	\label{fig:2}
\end{figure}

Briefly, the proof unfolds the relevant definitions and then
normalizes to clear denominators. The remaining terms of $\Delta_x$
are then distributed over addends. The unfolding and normalization of
addends is repeated in the lemmas \textit{simp1gx} and
\textit{simp2gx}. Finally, the resulting polynomial identity is proved
using the \textit{algebra} method.  Note that no computation was
required from an external tool.

The \emph{rewrite} tactic, which can modify a goal with various
rewrite rules in various locations (specified with a pattern), is
used to normalized terms \cite{noschinskipattern}.  Rewriting
in the denominators is sufficient for our needs.

For proving the resulting polynomial expression, the \textit{algebra}
proof method is used \cite{chaieb2007context}
\cite{chaieb2008automated} \cite{wenzel2019isabelle}.  Given
$e_i(x),\ p_{ij}(x),\ a_i(x) \in R[x_1,\ldots,x_n]$, where $R$ is a
commutative ring and $x=(x_1,\ldots,x_n)$, the method verifies
formulas
\[
\; \forall x.\ \bigwedge_{i = 1}^L
e_i({x}) = 0 \to \exists{y}.\ \bigwedge_{i = 1}^M
\left(a_i(x) = \sum_{j = 1}^N p_{ij}({x}) y_j \right)
\] 
The method is complete for such formulas that hold over all
commutative rings with unit \cite{harrison2007automating}.

\section{Group law for projective Edwards curves}\label{sec:proj}

By proving the group laws for a large class of elliptic curves,
Theorem \ref{thm:group} is sufficiently general for many applications
to cryptography.  Nevertheless, to achieve full generality, we
push forward.

This section shows how to remove the restriction $\bar d\not\in
k^{\times 2}$ that appears in the group law in the previous section.
By removing this restriction, we obtain a new proof of the group law
for all elliptic curves in characteristics different from $2$.
Unfortunately, in this section, some case-by-case arguments are
needed, but no hard cases are hidden from the reader.  The level of
exposition here is less elementary than in the previous section.
Again, we include proofs, because our approach is designed with
formalization in mind and has not been previously published.

The basic idea of our construction is that the projective
curve $E$ is obtained by gluing two affine curves $\Eaff$
together.  The associative property for $E$ is a consequence
of the associative property on affine pieces $\Eaff$, which
can be expressed as polynomial identities.

\subsection{definitions}\label{sec:defs}

In this section, we assume that $c\ne 0$ and that $c$ and $d$ are both
squares.  Let $t^2 = d/c$.  By a change of variable $y\mapsto
y/\sqrt{c}$, the Edwards curve takes the form
\begin{equation}\label{eqn:t}
e(x,y)= x^2 + y^2 -1 - t^2 x^2 y^2.
\end{equation}

We assume $t^2\ne 1$.  Note if $t^2=1$, then 
the curve degenerates to a product of intersecting lines, which
cannot be a group.  We also assume that $t\ne 0$, which only excludes the
circle, which has already been fully treated.  Shifting notation for
this new setting, let
\[
R_0 = \ring{Z}[t,\frac{1}{t^2-1},\frac1t],\quad
R_n = R_0[x_1,y_1,\ldots,x_n,y_n].
\]
As before, we write $e_i = e(z_i)$, $z_i=(x_i,y_i)$, and $ e(P_i) =
\phi(e_i)$ when a homomorphism $\phi$ is given.

Define rotation by $\rho(x,y)=(-y,x)$ and inversion $\tau$ by
\[
\tau(x,y) = (1/(tx),1/(ty)).
\]
Let $G$ be the abelian group of order eight generated by $\rho$ and
$\tau$.

\subsection{extended addition}

We extend the binary operation $\oplus_0$ using the automorphism $\tau$.
We also write $\delta_0$ for
$\delta$, $\nu_0$ for $\nu$ and so forth.

Set
\begin{equation}\label{eqn:tauplus}
  z_1\oplus_1 z_2 := \tau((\tau z_1)\oplus_0 z_2)=
  \left(\frac{x_1y_1 - x_2 y_2}{x_2
    y_1-x_1 y_2},\frac{x_1 y_1 + x_2 y_2}{x_1 x_2 + y_1 y_2}\right) 
= (\frac{\nu_{1x}}{\delta_{1x}},\frac{\nu_{1y}}{\delta_{1y}})
\end{equation}
in $R_2[\f{\delta_1}]^2$ where $\delta_1 = \delta_{1x}\delta_{1y}$.

We have the following easy identities of rational functions that are
proved by simplification of rational functions:
\begin{align}\label{eqn:r-tau}
\text{\it inversion invariance:}\quad\quad
\tau (z_1)\oplus_i z_2 &= z_1 \oplus_i \tau z_2;
\end{align}
\begin{align}\label{eqn:r-rho}
\text{\it rotation invariance:}\quad\quad
\begin{split}
\rho(z_1)\oplus_i z_2 &= \rho(z_1\oplus_i z_2);\\
\delta_i(z_1,\rho z_2) &= \pm \delta_i(z_1,z_2);
\end{split}
\end{align}
\begin{align}\label{eqn:r-iota}%
\text{\it inverses for $\sigma=\tau,\rho$:}\quad
\begin{split}
\iota \sigma(z_1) &= \sigma^{-1} \iota (z_1);\\
\iota (z_1\oplus_i z_2) &= (\iota z_1)\oplus_i (\iota z_2).
\end{split}
\end{align}
%
\begin{align}\label{eqn:r-coh}
  \text{\it coherence:}\quad\quad
\begin{split}
z_1 \oplus_0 z_2 \equiv z_1 \oplus_1 z_2 &\mod \{e_1,e_2\};\\
e(z_1\oplus_1 z_2) \equiv 0 &\mod \{e_1,e_2\}.
\end{split}
\end{align}
The first identity of (\ref{eqn:r-coh}) inverts $\delta_0\delta_1$,
and the second inverts $\delta_1$. Proofs of (\ref{eqn:r-coh}) use
polynomial division.

\subsection{projective curve and dichotomy}\label{sec:dichot}

Let $k$ be a field of characteristic different from two.  We let
$\Eaff$ be the set of zeros of Equation (\ref{eqn:t}) in $k^2$.  Let
$\Eoo\subset \Eaff$ be the subset of $\Eaff$ with nonzero coordinates
$x,y\ne0$.

We construct the projective Edwards curve $E$ by taking two copies of
$\Eaff$, glued along $\Eoo$ by isomorphism $\tau$.  We write $[P,i]\in
E$, with $i\in \ring{Z}/2\ring{Z}=\ring{F}_2$, for the image of $P\in
\Eaff$ in $E$ using the $i$th copy of $\Eaff$.  The gluing condition
gives for $P\in \Eoo$:
\begin{equation}\label{eqn:glue}
[P,i]=[\tau P,i+1].
\end{equation}

The group $G$ acts on the set $E$, specified on generators $\rho,\tau$
by $\rho[P,i]=[\rho(P),i]$ and $\tau[P,i]=[P,i+1]$.

We define addition on $E$ by
\begin{equation}\label{eqn:add-proj}
[P,i]\oplus [Q,j] = [P\oplus_\ell Q,i+j],\quad 
\text{if } \delta_\ell(P,Q)\ne 0,\quad \ell\in\ring{F}_2
\end{equation}
We will show that the addition is well-defined, is defined for all
pairs of points in $E$, and that it gives a group law with identity
element $[(1,0),0]$.  The inverse is $[P,i]\mapsto [\iota P,i]$, which
is well-defined by the inverse rules (\ref{eqn:r-iota}).

\begin{lemma} \label{lemma:no-fix} $G$ acts without fixed point on
  $\Eoo$.  That is, $g P = P$ implies that $g=1_G\in G$.
\end{lemma}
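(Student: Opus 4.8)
The plan is to enumerate $G$ explicitly and check the seven non-identity elements one at a time. First I would pin down the element list. A direct computation gives $\rho^2(x,y)=(-x,-y)$, hence $\rho^4=\mathrm{id}$, so $\rho$ has order four; and $\tau^2(x,y)=(x,y)$, so $\tau$ has order two; and $\rho\tau=\tau\rho$. Since the paper already records that $G$ is abelian of order eight, it follows that $G\cong\ring{Z}/4\ring{Z}\times\ring{Z}/2\ring{Z}$ and that every element is uniquely $\rho^a\tau^b$ with $0\le a\le 3$ and $b\in\{0,1\}$. It then suffices to show that for each of the seven choices $(a,b)\ne(0,0)$, the equation $\rho^a\tau^b P=P$ has no solution $P=(x,y)\in\Eoo$ (so $x,y\ne0$), using the standing hypotheses that $k$ has characteristic different from two and that $t\ne0$, $t^2\ne1$.

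The three pure rotations are immediate. For $\rho$ the fixed-point condition $(-y,x)=(x,y)$ forces $x=-y$ and $x=y$; the element $\rho^2$ forces $x=-x$ and $y=-y$; and $\rho^3(x,y)=(y,-x)=(x,y)$ forces $x=y=-x$. In every case characteristic $\ne 2$ gives $x=y=0$, contradicting $P\in\Eoo$. Next come the two off-diagonal elements. Writing $\rho\tau(x,y)=(-1/(ty),1/(tx))$, a fixed point satisfies $x=-1/(ty)$ and $y=1/(tx)$, i.e., $txy=-1$ and $txy=1$ simultaneously, which is impossible since $1\ne-1$ in characteristic $\ne 2$. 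The element $\rho^3\tau$ is handled identically.

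The only cases with genuine content are $\tau$ and $\rho^2\tau$, and these are where I expect the single real step of the argument. For $\tau$ the fixed-point condition $x=1/(tx)$, $y=1/(ty)$ gives $x^2=y^2=1/t$; substituting into the curve equation $e(x,y)=x^2+y^2-1-t^2x^2y^2=0$ yields $2/t-2=0$, so $t=1$, contradicting $t^2\ne1$. For $\rho^2\tau$ the condition $x=-1/(tx)$, $y=-1/(ty)$ gives $x^2=y^2=-1/t$, and the same substitution yields $-2/t-2=0$, so $t=-1$, again contradicting $t^2\ne1$.

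The main (and essentially only) obstacle is thus the $\tau$ and $\rho^2\tau$ cases, where the contradiction is not purely formal but requires feeding the fixed-point relations back into the defining equation of the curve and invoking $t^2\ne1$; every remaining case collapses to a parity or sign contradiction coming from $x,y\ne0$ and characteristic $\ne 2$. Since the seven non-identity elements each fix no point of $\Eoo$, any $g\in G$ with $gP=P$ for some $P\in\Eoo$ must be $1_G$, which is the claim.
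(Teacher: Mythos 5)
Your proof is correct and takes essentially the same route as the paper's: a case split over the elements of $G$, where the pure rotations $\rho^k$ force $x=y=0$ in characteristic $\ne 2$, the elements $\tau\rho^{\pm 1}$ yield the contradiction $txy=\pm 1$ simultaneously (the paper's ``$2txy=0$''), and $\tau$, $\tau\rho^2$ are excluded by substituting $tx^2=ty^2=\pm 1$ into the curve equation to get $e(x,y)=2(\pm 1-t)/t\ne 0$ using $t^2\ne 1$. You merely spell out all seven non-identity elements individually where the paper compresses them into the two families $\rho^k$ and $\tau\rho^k$.
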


\begin{proof} Write $P=(x,y)$.  If $g = \rho^k\ne 1_G$, then $g P = P$
  implies that $2x=2y=0$ and $x=y=0$ (if the characteristic is not
  two), which is not a point on the curve.  If $g = \tau \rho^k$, then
  the fixed-point condition $g P = P$ leads to $2t x y=0$ or $t x^2 =
  t y^2 =\pm 1$.  Then $e(x,y) = 2 (\pm1-t)/t\ne0$, and again $P$ is
  not a point on the curve.\qed
\end{proof}

The domain of $\oplus_i$ is
\[
\Eaf{i} := \{(P,Q)\in \Eaff^2\mid \delta_i(P,Q)\ne0\}.
\]
Whenever we write $P\oplus_i Q$, it is always accompanied by the
implicit assertion of summability; that is, $(P,Q)\in \Eaf{i}$.

There is a group isomorphism $\ang{\rho}\to \Eaff\setminus\Eoo$ given by
\[
g\mapsto g(1,0)\in\{\pm (1,0),\pm (0,1)\} = \Eaff\setminus \Eoo.
\]

\begin{lemma}[dichotomy]\label{lemma:noco} 
Let $P,Q\in \Eaff$.  Then either $P\in \Eoo$ and $Q=g \iota\, P$ for
some $g\in \tau\ang{\rho}$, or $(P,Q)\in \Eaf{i}$ for some $i$.
Moreover, assume that $P\oplus_i Q = (1,0)$ for some $i$, then $Q =
\iota\,P$.
\end{lemma}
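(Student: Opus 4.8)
The plan is to handle the dichotomy and the ``moreover'' clause separately, in each case reducing everything to the vanishing of the denominators $\delta_0,\delta_1$ and using $\operatorname{char}k\neq2$. For the dichotomy I would argue by contraposition: assume $(P,Q)\notin\Eaf{i}$ for both $i\in\{0,1\}$, i.e.\ $\delta_0(P,Q)=\delta_1(P,Q)=0$, and deduce that $P\in\Eoo$ and $Q\in\tau\ang{\rho}\,\iota P$. Write $P=(x_1,y_1)$ and $Q=(x_2,y_2)$. Since $\delta_0=1-t^4x_1^2y_1^2x_2^2y_2^2$, the equation $\delta_0=0$ gives $t^4(x_1y_1x_2y_2)^2=1$; as $t\neq0$ this forces all four coordinates to be nonzero, so $P,Q\in\Eoo$. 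Factoring $\delta_1=(x_2y_1-x_1y_2)(x_1x_2+y_1y_2)$, the condition $\delta_1=0$ gives either $Q=sP$ (proportional points, from $\delta_{1x}=0$) or $Q=s\,\rho P$ (perpendicular points, from $\delta_{1y}=0$) for some scalar $s$, using $P\neq0$. Substituting either form into $e(Q)=0$ and reducing by $e(P)=0$ collapses to the single relation $(s^2-1)(1-u)=0$ with $u:=t^2s^2x_1^2y_1^2$, while $\delta_0=0$ independently forces $u=\pm1$.

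The finish splits on $u$. If $u=1$, solving for $s$ yields $Q=(\pm1/(ty_1),\pm1/(tx_1))$ in the proportional case and $Q=(\mp1/(tx_1),\pm1/(ty_1))$ in the perpendicular case; computing the four points $\tau\rho^k\iota P$ directly from $\tau(x,y)=(1/(tx),1/(ty))$ and $\iota P=(x_1,-y_1)$ matches each such $Q$ with one of them. If $u=-1$, then $1-u=2\neq0$ because $\operatorname{char}k\neq2$, so $s^2=1$; now $u=-1$ together with $e(P)=0$ gives $x_1^2+y_1^2=0$ and $tx_1^2=\pm1$, and a short computation identifies each of $\pm P,\pm\rho P$ with some $\tau\rho^k\iota P$. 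In every case $Q\in\tau\ang{\rho}\,\iota P$. I expect this bookkeeping---matching each surviving solution to a definite coset representative, and especially the way $\operatorname{char}k\neq2$ kills the branch $u=-1$ unless $s^2=1$---to be the main obstacle.

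For the ``moreover'' clause I may assume summability $(P,Q)\in\Eaf{i}$ and $P\oplus_i Q=(1,0)$. When $i=0$, vanishing of the second numerator gives $x_1y_2+y_1x_2=0$ and the first coordinate being $1$ gives $x_1x_2-y_1y_2=1-t^2x_1x_2y_1y_2$. Eliminating $y_2$ and reducing modulo $e(P)=e(Q)=0$ produces $(x_2^2-x_1^2)(1-t^2x_2^2y_1^2)=0$. I would then show that the branch $x_2=-x_1$ forces $1+t^2x_1^2y_1^2=0$ (here again $\operatorname{char}k\neq2$) and that the branch $t^2x_2^2y_1^2=1$ likewise makes $\delta_0(P,Q)=0$; both contradict summability. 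Hence $x_2=x_1$, and then $x_1y_2+y_1x_2=0$ gives $y_2=-y_1$, i.e.\ $Q=\iota P$; the coordinate-zero boundary is handled by inspection.

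When $i=1$, the two conditions read $x_1y_1+x_2y_2=0$ and $x_2y_1-x_1y_2=2x_1y_1$. The curve equations give $x_2^2y_2^2=x_1^2y_1^2$ and hence $x_2^2+y_2^2=x_1^2+y_1^2$, so that $(x_2+y_2)^2=(x_1-y_1)^2$ and $(x_2-y_2)^2=(x_1+y_1)^2$. The four sign choices produce four candidate points; imposing the linear relation $x_2y_1-x_1y_2=2x_1y_1$ together with the summability $\delta_1\neq0$ eliminates all of them except $Q=(x_1,-y_1)=\iota P$. As throughout, each exceptional candidate is discarded precisely because it would violate the assumed invertibility of $\delta_i$.
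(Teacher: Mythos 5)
Your proposal is correct, and it reaches the conclusion by a genuinely different route than the paper. The paper proves both claims by ideal-membership certificates: for the dichotomy it normalizes (replacing $Q$ by $\rho Q$ so that $\delta_{0x}(P,Q)=0$, passing to $Q_0=\tau Q$), splits into the two cases $\delta_{\pm}=0$, and checks by Gr\"obner-basis reduction that $x_0^2-y_1^2$, $y_0^2-x_1^2$, $x_0y_0-x_1y_1$ vanish modulo $S_{\pm}$ up to the nonzero factors $2$, $1-t^2$, $x_0y_0$, whence $\tau Q=\pm(b_1,a_1)\in\Go\,\iota P$; the ``moreover'' clause is likewise a single Gr\"obner membership with fresh variables $q_x,q_y$ encoding invertibility of the denominators. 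You instead solve the constraints by hand: $\delta_{1x}=0$ or $\delta_{1y}=0$ parametrizes $Q=sP$ or $Q=s\rho P$, reducing $e(Q)$ modulo $e(P)$ gives $(s^2-1)(1-u)=0$ with $u=t^2s^2x_1^2y_1^2$, and $\delta_0=0$ forces $u=\pm1$. I verified the endgames: for $u=1$ your four points are exactly the $\tau\rho^k\iota P$; for $u=-1$, $\operatorname{char}k\ne2$ gives $s^2=1$, and $x_1^2+y_1^2=0$, $tx_1^2=\pm1$ indeed identify $\pm P,\pm\rho P$ with elements of $\tau\ang{\rho}\iota P$ (e.g.\ $tx_1^2=1$ gives $P=\tau\iota P$, and $tx_1^2=-1$ gives $P=\tau\rho^2\iota P$). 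The ``moreover'' eliminations also check out: in the $i=1$ case the candidates $\pm\rho P$ die because $\delta_{1y}$ vanishes on them identically, while $\rho^2\iota P$ forces $\delta_{1x}=2x_1y_1=0$, contradicting summability; in the $i=0$ case both bad branches make $\delta_0=0$ as you claim. The trade-off: your argument is certificate-free and explains structurally why the excluded pairs are precisely $Q\in\tau\ang{\rho}\iota P$, at the cost of more sign and case bookkeeping; the paper's version compresses all cases into polynomial reductions that Isabelle's \textit{algebra} method can discharge directly --- which matters here, since formalization caught errors in exactly these certificates.
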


\begin{proof}  
We start with the first claim.
We analyze the denominators in the formulas for $\oplus_i$.  
We have $(P,Q)\in\Eaf{0}$ for all $P$ or $Q\in \Eaff\setminus\Eoo$.
That case completed,  we may assume that $P,Q\in \Eoo$.
  Assuming
  \[
  \delta_0(P,Q) = \delta_{0x}(P,Q)\delta_{0y}(P,Q)=0,\quad\text{and}\quad
  \delta_1(P,Q) = \delta_{1x}(P,Q)\delta_{1y}(P,Q)=0,
  \]
  we show that $Q = g \iota P$ for some $g\in \tau\ang{\rho}$.
  Replacing $Q$ by $\rho Q$ if needed, which exchanges
  $\delta_{0x}\leftrightarrow \delta_{0y}$, we may assume that
  $\delta_{0x}(P,Q)=0$.  Set $\tau Q = Q_0 = (a_0,b_0)$ and
  $P=(a_1,b_1)$.  

We claim that
\begin{equation}\label{eqn:gp-}
(a_0,b_0) \in \{\pm (b_1,a_1)\} \subset \Go\iota\,P.
\end{equation}
We describe
the main polynomial identity that must be verified.
Write $\delta',\delta_{+},\delta_{-}$ for $x_0 y_0\delta_{0x}$, $t x_0
y_0\delta_{1x}$, and $t x_0 y_0 \delta_{1y}$ respectively, each
evaluated at $(P,\tau(Q_0))=(x_1,y_1,1/(t x_0),1/(t y_0))$.  The
nonzero factors $x_0y_0$ and $t x_0 y_0$ have been included to clear
denominators, leaving us with polynomials.

We have two cases $\pm$, according to $\delta_{\pm}=0$.  In each case,
let
\[
S_\pm = \text{Gr\"obner basis of } \{e_1,e_2, 
\delta',\delta_{\pm} 
\}.
\]
We have
\begin{align}\label{eqn:dichot}
\begin{split}
(x_0^2-y_1^2, 
 y_0^2-x_1^2, 
 x_0 y_0 - x_1 y_1 
) &\equiv (0,0,0) \mod S_+\\
(2 x_0 y_0 (x_0^2-y_1^2), 
2 (1-t^2) x_0 y_0 (y_0^2-x_1^2), 
x_0 y_0 - x_1 y_1 
) &\equiv (0,0,0) \mod S_-.
\end{split}
\end{align}
In fact, $\delta' = x_0 y_0-x_1 y_1$, so that the ideal membership for
this polynomial is immediate.  The factors $2$, $1-t^2$, and $x_0 y_0$
are nonzero and can be removed from the left-hand side.
These equations then immediately yield $(a_0,b_0) = \pm (b_1,a_1)$.  
This gives the needed identity:  $\tau Q =
Q_0 = (a_0,b_0) = g \iota\,P$, for some $g\in \Go$.  Then $Q = \tau g
\iota\,P$.

The second statement of the lemma has a similar proof.  Polynomial
division gives for $i\in \ring{F}_2$:
\[
(x_1-x_2,y_1+y_2)\equiv (0,0) \mod \text{Gr\"obner} 
\{ e_1,e_2,q_x \delta_{ix}-1,q_y \delta_{iy}-1,
\nu_{i y},\nu_{i x}-\delta_{i x} \}.
\]
In fact, both $x_1-x_2$ and $y_1+y_2$ (which specify the condition $Q
=\iota\,P$) are already members of the Gr\"obner basis.  The fresh
variables $q_x,q_y$ force the denominators $\delta_{ix}$ and
$\delta_{iy}$ to be invertible.  Here the equations $\nu_{i y}=\nu_{i
  x}-\delta_{i x}=0$ specify the sum
$(1,0)=(\nu_{ix}/\delta_{ix},\nu_{iy}/\delta_{iy})$ of $Q$ and $P$.
\qed\end{proof}

\begin{lemma}[covering]\label{lemma:cov} 
  The rule (\ref{eqn:add-proj}) defining
  $\oplus$ assigns at least one value for every pair of points in $E$.
\end{lemma}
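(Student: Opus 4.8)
The plan is to reduce the statement to the dichotomy lemma (Lemma~\ref{lemma:noco}). Given two points of $E$ presented by representatives $[P,i]$ and $[Q,j]$, I would first apply the dichotomy to the pair $(P,Q)\in\Eaff^2$. One alternative produces an index $\ell$ with $(P,Q)\in\Eaf{\ell}$, and then rule (\ref{eqn:add-proj}) directly assigns the value $[P\oplus_\ell Q,\,i+j]$, so there is nothing to prove. Hence the entire content of the lemma lies in the other alternative, the exceptional configuration $P\in\Eoo$ with $Q=g\,\iota\,P$ for some $g\in\tau\Go$; here both $\delta_0(P,Q)$ and $\delta_1(P,Q)$ may vanish, and I would assign a value by changing representatives.

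Since $P\in\Eoo$ in this case, the gluing relation (\ref{eqn:glue}) lets me rewrite $[P,i]=[\tau P,\,i+1]$ and pass to the pair $(\tau P,Q)$. The key claim is that this new pair is never exceptional. To set it up, I would write $g=\tau\rho^k$ and rewrite the hypothesis $Q=g\,\iota\,P$ using the inverse rule $\iota\,\tau=\tau^{-1}\iota$ from (\ref{eqn:r-iota}) together with $\tau^2=1_G$: because $G$ is abelian the two copies of $\tau$ cancel, so that $Q=\tau\rho^k\,\iota\,P$ becomes $Q=\rho^k\,\iota(\tau P)$. The regluing has thus removed the $\tau$-twist, and $Q$ is now tied to $\iota(\tau P)$ by an element of $\Go$ rather than of $\tau\Go$.

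Finally I would feed $(\tau P,Q)$ back into the dichotomy. Were this pair again exceptional, we would have $Q=h\,\iota(\tau P)$ for some $h\in\tau\Go$; comparing with $Q=\rho^k\,\iota(\tau P)$ and using that $\iota(\tau P)\in\Eoo$, the freeness of the $G$-action (Lemma~\ref{lemma:no-fix}) forces $h=\rho^k$, which contradicts the disjointness of the cosets $\Go$ and $\tau\Go$. Therefore $(\tau P,Q)\in\Eaf{\ell}$ for some $\ell$, and (\ref{eqn:add-proj}) assigns $[(\tau P)\oplus_\ell Q,\,i+1+j]$. I expect the exceptional case to be the only real obstacle: one must notice both that $\Eoo$-membership is exactly what licenses the regluing and that a single $\tau$-regluing of one coordinate suffices to escape it, after which the argument is pure group bookkeeping — abelianness, $\tau^2=1_G$, freeness of the action, and coset disjointness.
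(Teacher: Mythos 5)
Your proof is correct and follows essentially the same route as the paper: apply dichotomy once, and in the exceptional case $Q=\tau\rho^k\iota\,P$ escape it by a single $\tau$-regluing, using the fixed-point-free action of $G$ (Lemma~\ref{lemma:no-fix}) to show the reglued pair cannot again be exceptional. The only difference is cosmetic — you reglue $[P,i]=[\tau P,i+1]$ while the paper reglues $[Q,j]=[\tau Q,j+1]$ — and your bookkeeping via $\iota(\tau P)=\tau\iota P$ and coset disjointness of $\Go$ and $\tau\Go$ is sound.
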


\begin{proof} If $Q=\tau \rho^k \iota\,P$, then $\tau Q$ does not have
  the form $\tau\rho^k\iota P$ because the action of $G$ is
  fixed-point free.  By dichotomy,
\begin{equation}\label{eqn:tt}
[P,i]\oplus [Q,j] = [P\oplus_\ell \tau Q,i+j+1]
\end{equation}
works for some $\ell$.  Otherwise, by dichotomy $P\oplus_\ell Q$ is
defined for some $\ell$.
\qed\end{proof}

\begin{lemma}[well-defined] Addition $\oplus$ given by
  (\ref{eqn:add-proj}) on $E$ is well-defined.
\end{lemma}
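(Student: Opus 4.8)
The plan is to reduce well-definedness to a single checkable invariance and then dispatch it with the symmetry identities already in hand. A value $[P,i]\oplus[Q,j]$ depends on two choices: the index $\ell\in\ring{F}_2$ used in (\ref{eqn:add-proj}), and the representatives of the two glued classes. I would first dispose of the index. If both $\delta_0(P,Q)\ne0$ and $\delta_1(P,Q)\ne0$, then the first part of coherence (\ref{eqn:r-coh}) gives $P\oplus_0 Q=P\oplus_1 Q$ as points of $\Eaff$ (numerators agree modulo $\{e_1,e_2\}$, and both points lie on the curve by Lemma~\ref{lemma:closure} and the second part of (\ref{eqn:r-coh})), so $[P\oplus_0 Q,i+j]=[P\oplus_1 Q,i+j]$. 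With the index irrelevant, well-definedness is exactly invariance under the generating gluing relation (\ref{eqn:glue}). Since the definition (\ref{eqn:add-proj}) is symmetric in the two classes (each $\oplus_i$ is symmetric in its arguments by (\ref{eqn:add}) and (\ref{eqn:tauplus}), and $i+j=j+i$), invariance on the second factor reduces to invariance on the first. So it suffices to prove, for $P\in\Eoo$,
\[
[\tau P,\,i+1]\oplus[Q,j]\;=\;[P,i]\oplus[Q,j]\quad\text{in }E.
\]

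The engine is a ``master relation'' read off from the definition (\ref{eqn:tauplus}): since $z_1\oplus_1 z_2=\tau((\tau z_1)\oplus_0 z_2)$ and $\tau^2=1$, replacing $z_1$ by $\tau z_1$ as needed yields, for both indices,
\[
(\tau P)\oplus_\ell Q\;=\;\tau\bigl(P\oplus_{\ell+1} Q\bigr),\qquad \ell\in\ring{F}_2,
\]
as an identity of rational functions. In the \emph{generic} situation, where $(P,Q)\in\Eaf{\ell_0}$ and $R:=P\oplus_{\ell_0}Q$ lies in $\Eoo$, this closes immediately: gluing (\ref{eqn:glue}) gives $[R,i+j]=[\tau R,i+1+j]$, the master relation identifies $\tau R=(\tau P)\oplus_{\ell_0+1}Q$ (so in particular $\delta_{\ell_0+1}(\tau P,Q)\ne0$), and both sides therefore equal $[R,i+j]$. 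The only remaining work is in the configurations where $R\notin\Eoo$ or where no index is admissible.

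These exceptional configurations are exactly the ones flagged by dichotomy (Lemma~\ref{lemma:noco}): either $R\in\Eaff\setminus\Eoo=\Go(1,0)$, which via rotation invariance (\ref{eqn:r-rho}) and the second part of dichotomy forces $Q\in\Go\,\iota P$; or $(P,Q)\notin\Eaf{i}$ for all $i$, which forces $Q=g\,\iota P$ with $g\in\tau\Go$. In either case $Q$ lies in the $G$-orbit of $\iota P$, so I would strip off the rotation $\rho^k$ using (\ref{eqn:r-rho}) and reduce to the two coset representatives $Q=\iota P$ and $Q=\tau\iota P$. On the pair $(P,\iota P)$ dichotomy places us in some $\Eaf{\ell}$, and the inverse lemma gives $P\oplus_\ell\iota P=(1,0)$; on the pair $(\tau P,\iota P)$ dichotomy lands in the exceptional branch (no admissible index), so I must invoke the covering substitution (\ref{eqn:tt}) replacing $Q$ by $\tau Q$, after which $\iota\tau=\tau\iota$ from (\ref{eqn:r-iota}), inversion invariance (\ref{eqn:r-tau}), and the inverse lemma again collapse the result to $(1,0)$. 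Here $G$ abelian and fixed-point-free (Lemma~\ref{lemma:no-fix}) is what makes the orbit bookkeeping $\tau g\tau=g$ consistent, so both computed classes coincide.

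The main obstacle is precisely this last bookkeeping. The loci where a denominator $\delta_\ell$ vanishes are exactly the loci where $\tau$ of the naive sum is undefined, so at those points one cannot quote the master relation and must instead switch to the covering formula (\ref{eqn:tt}) and re-derive admissibility of an index from dichotomy. The delicate check is that the $\ring{F}_2$-index shift introduced by the covering substitution on one side is compensated exactly by the gluing shift on the other, so that the two sides land on the \emph{same} class in $E$ rather than on classes differing by the nontrivial action of $\tau$. Once the master relation, dichotomy, the covering lemma, and the inverse lemma are marshalled, each case is a short direct computation.
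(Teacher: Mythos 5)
Your proposal is correct, and its load-bearing ingredients are the same as the paper's: coherence (\ref{eqn:r-coh}) to eliminate the choice of index, the identity $\tau(P\oplus_{\ell}\tau Q)=P\oplus_{\ell+1}Q$ (your ``master relation,'' obtained from (\ref{eqn:tauplus}) and (\ref{eqn:r-tau})) to transport a gluing move through the addition, and dichotomy (Lemma~\ref{lemma:noco}) to isolate the degenerate configurations. Where you genuinely diverge is in how the degenerate case is dispatched. The paper observes that when $Q=\tau\rho^k\iota\,P$ an easy polynomial simplification gives $\delta_0(P,Q)=\delta_1(P,Q)=0$, so only the substituted rule (\ref{eqn:tt}) is applicable and well-definedness is automatic --- nothing has to be evaluated. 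You instead evaluate: you strip $\rho^k$ by rotation invariance (\ref{eqn:r-rho}), reduce to the coset representatives $Q=\iota P$ and $Q=\tau\iota P$, and show all admissible computations of both sides collapse to $[\rho^k(1,0),\cdot]$ by the inverse lemma. This costs a few extra case computations but buys something the paper's chain of equalities leaves tacit: the gluing step $[R,m]=[\tau R,m+1]$ is legitimate only when the intermediate sum $R$ lies in $\Eoo$, and your explicit case split on $R\in\Eoo$ (converting $R\notin\Eoo$ into $Q\in\Go\,\iota P$ via rotation invariance and the second half of dichotomy) handles exactly the summability bookkeeping that, in the formalization, required the upper block of Table~\ref{table:1}. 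One caveat: at the step ``dichotomy lands in the exceptional branch (no admissible index)'' you are reading Lemma~\ref{lemma:noco} as an \emph{exclusive} dichotomy, whereas as stated it is inclusive; the exclusivity is precisely the vanishing identity $\delta_0(z,\tau\rho^k\iota\,z)=\delta_1(z,\tau\rho^k\iota\,z)=0$ that the paper proves inside this very lemma. You need that identity too, and not just for convenience: if, say, the pair $(\tau P,\iota\tau P)$ were summable, the direct rule would output $[(1,0),i+j+1]$ against the $[(1,0),i+j]$ from your substituted computation, and since $(1,0)\notin\Eoo$ no gluing could reconcile them --- so state and check the vanishing identity rather than attributing it to dichotomy, and your argument is complete.
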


\begin{proof}
  The right-hand side of (\ref{eqn:add-proj}) is well-defined by
  coherence (\ref{eqn:r-coh}), provided we show well-definedness
  across gluings (\ref{eqn:glue}).  We use dichotomy.  If $Q=\tau
  \rho^k \iota\,P$, then by an easy simplification of polynomials,
\[
\delta_0(z,\tau\rho^k\iota z)=\delta_1(z,\tau\rho^k\iota z)=0.
\]
so that only one rule (\ref{eqn:tt}) for $\oplus$ applies (up to
coherence (\ref{eqn:r-coh}) and inversion (\ref{eqn:r-tau})), making
it necessarily well-defined.  Otherwise, coherence (\ref{eqn:r-coh}),
inversion (\ref{eqn:r-tau}), and (\ref{eqn:tauplus})) give when
$[Q,j]=[\tau Q,j+1]$:
 \[ 
[P\oplus_k \tau Q,i+j+1]=[\tau(P\oplus_k \tau Q),i+j] =
 [P\oplus_{k+1} Q,i+j] = [P\oplus_\ell Q,i+j].
\]
\qed\end{proof}

\subsection{group law}

\begin{theorem}\label{thm:proj-group}  $E$ is an abelian group.
\end{theorem}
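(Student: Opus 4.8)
The plan is to verify the abelian group axioms for $(E,\oplus)$ one at a time, treating closure and well-definedness as already settled by the covering lemma (Lemma~\ref{lemma:cov}) and the well-definedness lemma, and reducing everything else either to the affine lemmas of Section~\ref{sec:assoc} or to the structure of the finite group $G$. The genuinely hard axiom is associativity; the rest I expect to be short.

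First the easy axioms. Commutativity is inherited from the evident symmetry $1\leftrightarrow 2$ of each rational map $\oplus_\ell$ (checked directly for $\oplus_0$ and, for $\oplus_1$, from its defining formula (\ref{eqn:tauplus})) together with commutativity of $+$ in $\ring{F}_2$: $[P,i]\oplus[Q,j]=[P\oplus_\ell Q,i+j]=[Q\oplus_\ell P,j+i]=[Q,j]\oplus[P,i]$. For the identity element $o=[(1,0),0]$, I would observe that $(1,0)\in\Eaff\setminus\Eoo$ cannot satisfy the first branch of the dichotomy lemma (for $P\in\Eoo$ the orbit $\tau\ang{\rho}\,\iota P$ lies in $\Eoo$, which excludes $(1,0)$), so $(P,(1,0))\in\Eaf{0}$ and $P\oplus_0(1,0)=P$ gives $[P,i]\oplus o=[P,i]$. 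For inverses, the fixed-point-free action (Lemma~\ref{lemma:no-fix}) again rules out the first branch of dichotomy for the pair $(P,\iota P)$, so this pair is summable; the affine inverse lemma (and coherence (\ref{eqn:r-coh}), should only $\oplus_1$ be available) then yields $[P,i]\oplus[\iota P,i]=[(1,0),0]$.

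The substance is associativity, and here the plan is to exploit that $G$ acts on $(E,\oplus)$ by ``translations.'' Using rotation and inversion invariance (\ref{eqn:r-rho})--(\ref{eqn:r-tau}) lifted across the gluing, each generator satisfies $g(X\oplus Y)=(gX)\oplus Y=X\oplus(gY)$; in particular, writing $g\cdot o$ for the image of the identity, one obtains the key identity $(g\cdot o)\oplus X=g\cdot X$ for all $g\in G$. This immediately linearizes any triple product in which one or more of the three arguments is a \emph{special} point of $\Eaff\setminus\Eoo$ (equivalently, a point of the $G$-orbit $G\cdot o$): such arguments may be peeled off as applications of $G$, and the corresponding instances of associativity follow from associativity of the action of the abelian group $G$. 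This disposes of all cases except the one in which the three chosen affine representatives lie in $\Eoo$ and the two bracketings are both computed, after converting every occurrence of $\oplus_1$ to $\oplus_0$ by the automorphism $\tau$ and coherence (\ref{eqn:r-coh}), entirely with $\oplus_0$. In that generic case the two sides agree by the single polynomial identity of the generic associativity lemma (Lemma~\ref{lemma:assoc}), provided the denominator $\Delta_x\Delta_y$ is nonzero.

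The main obstacle is exactly the vanishing locus $\Delta_x\Delta_y=0$, which (unlike in Section~\ref{sec:assoc}, where the affine closure lemma forced nonvanishing) genuinely occurs here because $d=t^2$ is a square. My plan is to show that each factor of $\Delta_x\Delta_y$ vanishes only through a coincidence already understood: each factor is a denominator $\delta$ of a partial $\oplus_0$-sum, and the dichotomy lemma (Lemma~\ref{lemma:noco}) says its vanishing forces either $\oplus_1$-summability of that pair (handled by coherence, returning us to the generic identity in shifted charts) or an inverse relation $Q=g\iota P$, which makes the corresponding partial sum a special point and sends us back to the $G$-translation reduction above. Confirming that this case split is \emph{exhaustive} --- that no configuration of three curve points escapes both the generic polynomial identity and the finitely many special-point reductions --- is the delicate step, since it is the one genuinely case-based argument in the paper, in contrast with the one-line affine identity. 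I would organize it by the pattern of which of $\delta_{12}$, $\delta_{23}$, and the two outer denominators vanish, invoking Lemma~\ref{lemma:no-fix} and Lemma~\ref{lemma:noco} to close each branch.
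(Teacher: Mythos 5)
Your overall architecture is the paper's: reduce projective associativity via $G$-equivariance and dichotomy (Lemma~\ref{lemma:noco}) to affine polynomial identities, with the easy axioms handled as you describe. But there is a genuine gap at the point you yourself flag as delicate, and it is not closable by the tools you list. Consider the branch where all three representatives lie in $\Eoo$, $P\oplus Q$ is computed in some chart, and the outer pair $(P\oplus Q,\,R)$ is non-summable in \emph{both} charts. Dichotomy then yields $R=g\,\iota(P\oplus Q)$ with $g\in\tau\ang{\rho}$, so one bracketing gives $(P\oplus Q)\oplus R=g\cdot o$ directly. Your proposal says this "sends us back to the $G$-translation reduction," but that reduction peels off special points occurring \emph{among the three arguments}, and here the special point arises only as a computed partial sum on one side. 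To close the branch you must show the other bracketing also equals $g\cdot o$: after cancelling $g$ and applying $\iota$, this is precisely the cancellation law
\[
([P,0]\oplus [Q,0])\oplus [\iota\,Q,0]=[P,0],
\]
which the paper isolates as Equation~(\ref{eqn:semi}) and proves \emph{before} general associativity. It is not an instance of generic associativity (the triple $(P,Q,\iota Q)$ is exactly where the inner denominators can vanish: $Q$ and $\tau\iota Q$ are never summable), and its proof required its own reduction tricks in the paper and, in the formalization, three nested dichotomies plus the lower half of Table~\ref{table:1}. Your plan never identifies this lemma, so the case analysis you propose cannot be made exhaustive as written.

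A secondary weakness: you propose to rely solely on the single-chart identity of Lemma~\ref{lemma:assoc}, converting every $\oplus_1$ to $\oplus_0$ via $\tau$ and coherence~(\ref{eqn:r-coh}). Coherence only applies where both charts' denominators are invertible, and the unconditional conversion $P\oplus_1 Q=\tau((\tau P)\oplus_0 Q)$ from (\ref{eqn:tauplus}) shifts the argument, so the nonvanishing you need becomes $\delta_0(\tau P,Q)\ne 0$ (and requires $P\in\Eoo$ for $\tau P$ to exist) --- conditions your dichotomy bookkeeping does not track. The paper sidesteps this by proving the mixed-chart family~(\ref{eqn:assoc-affine}) for all $i,j,k,\ell\in\ring{F}_2$, so that whichever charts dichotomy hands you, a ready-made polynomial identity applies. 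Your route could likely be repaired, but only by adding both the cancellation lemma~(\ref{eqn:semi}) and either the mixed-chart identities or a careful domain analysis of the $\tau$-conversion.
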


\begin{proof} We have already shown the existence of an identity and
  inverse.

  We prove associativity.  Both sides of the associativity identity
  are clearly invariant under shifts $[P,i]\mapsto [P,i+j]$ of the
  indices.  Thus, it is enough to show
\[
[P,0] \oplus ([Q,0]\oplus [R,0]) = ([P,0]\oplus [Q,0])\oplus [R,0].
\]
By polynomial division, we have the following associativity identities
\begin{equation}\label{eqn:assoc-affine}
 (z_1\oplus_k z_2)\oplus_\ell z_3 \equiv z_1 
\oplus_i (z_2\oplus_j z_3) \mod \{e_1,e_2,e_3\}
\end{equation}
in the appropriate localizations, for $i,j,k,\ell\in \ring{F}_2$.

Note that $(g [P_1,i])\oplus [P_2,j] = g([P_1,i]\oplus [P_2,j])$ for
$g\in G$, as can easily be checked on generators $g=\tau,\rho$ of $G$,
using dichotomy, (\ref{eqn:add-proj}), and (\ref{eqn:r-rho}).  We use
this to cancel group elements $g$ from both sides of equations without
further comment.

We claim that
\begin{equation}\label{eqn:semi}
([P,0]\oplus [Q,0])\oplus [\iota\,Q,0] = [P,0].
\end{equation}
The special case $Q= \tau\rho^k \iota(P)$ is easy. We reduce the claim
to the case where $P\oplus_\ell Q\ne \tau\rho^k Q$, by applying $\tau$
to both sides of (\ref{eqn:semi}) and replacing $P$ with $\tau P$ if
necessary.  Then by dichotomy, the left-hand side simplifies by affine
associativity \ref{eqn:assoc-affine} to give the claim.

Finally, we have general associativity by repeated use of dichotomy,
which reduces in each case to (\ref{eqn:assoc-affine}) or
(\ref{eqn:semi}).
\qed\end{proof}

\subsection{formalization in Isabelle/HOL of projective Edwards curves}

Following the change of variables performed in Section~\ref{sec:defs}, 
it is assumed that $c = 1$ and $d = t^2$ where
$t \neq -1,0,1$. The resulting formalization is more challenging. In the
following, some key insights are emphasized.

\subsubsection{Gr\"{o}bner basis}

The proof of Lemma~\ref{lemma:noco} (dichotomy) requires solving
particular instances of the ideal membership problem.  Formalization
caught and corrected some ideal membership errors in
\cite{hales2016group}, which resulted from an incorrect interpretation
of computer algebra calculations.  For instance, a goal
\[
\exists r_1 \, r_2 \, r_3 \, r_4.\ 
y_0^2 - x_1^2 = r_1 e(x_0,y_0) + r_2 e(x_1,y_1) + r_3 \delta' + r_4
\delta_{-} 
\] 
(derived from \cite{hales2016group})
had to be corrected to 
\[
\exists r_1 \, r_2 \, r_3 \, r_4.\ 
2 x_0 y_0 (y_0^2 - x_1^2) = r_1 e(x_0,y_0) + r_2 e(x_1,y_1) + r_3 \delta' + r_4
\delta_{-}
\]
to prove (\ref{eqn:dichot}).
In another subcase, it was necessary to strengthen the hypothesis
$\delta_{+} = 0$ to $\delta_{-} \neq 0$. Eventually, after some reworking,
\textit{algebra} solved the required ideal membership
problems.

\subsubsection{definition of the group addition}

We defined the addition in three stages. This is convenient for some
lemmas like covering (Lemma~\ref{lemma:cov}). 
First, we define the addition on projective
points (Figure~\ref{fig3}). Then, we add two classes of points by
applying the basic addition to any pair of points coming from each
class. Finally, we apply the gluing relation and obtain as a result a
set of classes with a unique element, which is then defined as the
resulting class (Figure~\ref{fig4}).

\begin{figure}
{\input{proj-add-1.tex}}
\caption{Definition of $\oplus$ on points}
\label{fig3}
\end{figure}
\begin{figure}
	{\input{proj-add-2.tex}}
	{\input{proj-add-3.tex}}
	\caption{Definition of $\oplus$ on classes}
	\label{fig4}
\end{figure}

The definitions use Isabelle's ability to encode partial
functions. However, it is possible to obtain an equivalent definition
more suitable for execution. In particular, it is easy to compute the
gluing relation (see lemmas $\text{e\_proj\_elim\_1}$,
$\text{e\_proj\_elim\_2}$ and $\text{e\_proj\_aff}$ in the
formalization scripts).

Finally, since projective addition works with classes, we had to show
that its definition does not depend on the representative used.

\subsubsection{proof of associativity}

\begin{table}
\begin{center}
\begin{tabular}{ r c l }
  $\delta \; \tau P_1 \; \tau P_2 \neq 0$
  & $\implies$ & $\; \delta \; P_1 \; P_2 \neq 0$ \\ 
  $\delta' \; \tau P_1 \; \tau P_2 \neq 0$
  & $\implies$ & $\; \delta' \; P_1 \; P_2 \neq 0$ \\ 
  $\delta \; P_1 \; P_2 \neq 0$,\quad $\; \delta \; P_1 \; \tau P_2 \neq 0$
  & $\implies$ & $\delta' \; P_1 \; P_2 \neq 0$ \\ 
  $\delta' \; P_1 \; P_2 \neq 0$,\quad $\; \delta' \; P_1 \; \tau P_2 \neq 0$
  & $\implies$ & $\delta \; P_1 \;P_2 \neq 0$ \\
  \hline
  $\delta' \; (P_1 \oplus_1 P_2) \; \tau \iota P_2 \neq 0$
  & $\implies$ & $\; \delta \; (P_1 \oplus_1 P_2) \; \iota P_2 \neq 0$ \\ 
  $\delta \; P_1 \; P_2 \neq 0$,\quad $\; \delta \; (P_1 \oplus_0 P_2) \; \tau \iota P_2 \neq 0$
  & $\implies$ & $\; \delta' (P_1 \oplus_0 P_2) \; \iota P_2 \neq 0$ \\ 
  $\delta \; P_1 \; P_2 \neq 0$,\quad $\; \delta' \; (P_0 \oplus_0 P_1) \; \tau \iota P_2 \neq 0$
  & $\implies$ & $\; \delta \; (P_0 \oplus_0 P_1) \; \iota P_2 \neq 0$ \\ 	    
  $\delta' \; P_1 \; P_2 \neq 0$,\quad $\; \delta \; (P_0 \oplus_1 P_1) \; \tau \iota P_2 \neq 0$
  & $\implies$ & $\; \delta' \; (P_0 \oplus_1 P_1) \; \iota P_2 \neq 0$ \\
\end{tabular}
\end{center}
	\caption{List of $\delta$ relations}
	\label{table:1}
\end{table}

During formalization, we found several relations between $\delta$
expressions (see Table~\ref{table:1}). While they were proven in order
to show associativity, the upper group can rather be used to establish
the independence of class representative and the lower group is
crucial to establish the associativity law.

In particular, the lower part of the table is fundamental to the formal
proof of
Equation~(\ref{eqn:semi}).
In more detail, the formal proof development showed that it was
necessary to perform a dichotomy (Lemma~\ref{lemma:noco}) three
times. The first dichotomy is performed on $P$, $Q$. The non-summable
case was easy. Therefore, we set $R = P \oplus Q$. On each of the
resulting branches, a dichotomy on $R$, $\iota Q$ is performed. This
time the summable cases were easy, but the non-summable case required a
third dichotomy on $R,\tau \iota Q$. The non-summable case was solved
using the no-fixed-point theorem but for the summable subcases the
following expression is obtained:
\[
([P,0] \oplus [Q,0]) \oplus [\tau \iota Q,0] =
[(P \oplus Q) \oplus \tau \iota Q,0]
\] 
Here we cannot invoke associativity because $Q$, $\tau \iota Q$ are
non-summable (lemma $\text{not\_add\_self}$).  Instead, we use the
equations from the lower part of the table and the hypothesis of the
second dichotomy to get a contradiction.

\section{Conclusion}

We have shown that Isabelle can encompass the process
of defining, computing and certifying intensive algebraic
calculations. The encoding in a proof-assistant allows a better
comprehension of the methods used and helps to clarify its structure.

\newpage


\bibliography{refs} 
\bibliographystyle{alpha}

\end{document}